\documentclass[12pt]{amsart}
\usepackage{amscd}
\usepackage[all]{xy}
\usepackage[notcite,notref]{showkeys}

 \newcommand{\Br}{\operatorname{Br}}

\newcommand{\rank}{{\operatorname{rank}}}

\newcommand{\CH}{\operatorname{CH}}

\newcommand{\Bl}{\operatorname{Bl}}

\newcommand{\C}{\mathbf{C}}
\renewcommand{\L}{\mathbf{L}}
\renewcommand{\P}{\mathbf{P}}
\newcommand{\N}{\mathbf{N}}
\newcommand{\Q}{\mathbf{Q}}
\newcommand{\Z}{\mathbf{Z}}
\newcommand{\un}{\mathbf{1}}

 \newcommand{\sC}{\mathcal{C}}
\newcommand{\sG}{\mathcal{G}}
\newcommand{\sM}{\mathcal{M}}

\newcommand{\sO}{\mathcal{O}}

 \newcommand{\sF}{\mathcal{F}}
\newcommand{\sA}{\mathcal{A}}

 \newcommand{\sS}{\mathcal{S}}

 \newcommand{\sT}{\mathcal{T}}

 \numberwithin{equation}{section}

\theoremstyle{plain}
\newtheorem{thm}[equation]{Theorem}
\newtheorem{prop}[equation]{Proposition}

\newtheorem{conj}[equation]{Conjecture}

\theoremstyle{definition}

\newtheorem{ex}[equation]{Example}
\newtheorem{rk}[equation]{Remark}

\begin{document}
 \title{Kuznetsov components and transcendental motives of cubic fourfolds}
 \author[C. Pedrini]{Claudio Pedrini}
\address{Dipartimento di Matematica \\ %
Universit\'a degli Studi di Genova \\ %
Via Dodecaneso 35 \\ %
16146 Genova \\ %
Italy}
\email{claudiopedrini4@gmail.com}

\begin{abstract} Let $X \subset \P^5_{\C}$ be a smooth cubic fourfold.The Kuznetsov component $\sA_X$ is contained in the derived category $D^b(X)$ and the transcendental  motive $t(X)$  is contained in  the category of Chow
motives $\sM_{rat}(\C))$. If $X$ and $Y$ are {\it Fourier -Mukai partners}  and hence the categories $\sA_X$ and $\sA_Y$ are equivalent,  then their transcendental motives  $t(X)$ and $t(Y)$ are isomorphic.
The aim of this note is to consider  families of special  cubic fourfolds $X$  with their FM-partners  $Y$ and to give an explicit description of the  isomorphism between the transcendental motives, in the case $X$ and $Y$ are rational and when they are conjecturally irrational. We also prove that ,for special cubic fourfolds $X $ in countably many Hassett divisors, with a symplectic automorphism of order 3,  there exists another special  cubic fourfold $Y$, an equivalence of categories $\sA^G_X \simeq \sA_{Y}$, where $\sA^G_X$ is the equivariant Kuznetsov component, and an isomorphism $t(X) \simeq t(Y)$. 

\end{abstract}
\maketitle 

\section{Introduction}

Let $X$ be a complex smooth cubic fourfold in $\P^5$. Let $A(X)=H^4(X,\Z)\cap H^{2,2}(X)$ be the lattice of algebraic cycles. The fourfold $X$ is {\it special} if  the lattice $A(X)$ contains a class  $v$ that is not homologous to $h^2$, where $h$ is the class of a hyperplane section. A fourfold $X$ is  {\it very general} if $\rank A(X) =1$. For $X$ special let $ K_d=<h^2,v>$, where $d$ is the discriminant of $<h^2,v>$.\par
 A polarized K3 surface  $(S,L)$ of  degree $d$ and genus $g$,  with $L^2= 2g-2$ and $2g-2 =d$, is  said  to be associated to a cubic fourfold $X \in \sC_d$ if there is an isomorphism of Hodge structures
\begin{equation} \label{K3} K^{\perp}_d \simeq H^2(S, \Z)_{prim}(-1). \end{equation}
Let $\sC$ be the moduli space of smooth projective cubic fourfolds and let  $\sC_d \subset \sC$ be the {\it Hassett divisor }  parametrizing  special  cubic fourfolds  with a labelling of discriminant d, i.e. a positive defined rank-two primitive sublattice $K_d=<h^2,v> \subset A(X)$.
 B. Hassett  in [Hass] proved  that $X \in \sC_d$ has an associated  polarized K3 surface  of degree $d$ if and only if $d$ satisfies the following numerical condition\par
(*) $d>6$ and  $d$ is not divisible by 4,9 or a prime $p \equiv 2 (3)$.\par
The results of several Authors (Hassett,Kuznetsov, Addington-Thomas) suggest
that a cubic fourfold is rational if and only if it has an associated
K3 surface. Kuznetsov conjectured that X has an associated K3 surface
if and only if there exists a semi-orthogonal decomposition of the
derived category $D^b(X)$ of bounded complexes of coherent sheaves
\begin{equation}\label{Kuz} D^b(X)=< \sA_X, \sO_X(1),\sO_X(2)>\end{equation}
such that  the {\it  Kuznetsov  component } $\sA_X,$ is equivalent to the category $D^b(S)$,where $ S$ is K3 surface.The following result proved in [BLMNPS, Cor.29.7], shows that the two conditions are in fact equivalent
\begin{thm}\label{FMequiv} Let $X$ be cubic fourfold. Then $X$ has a Hodge-theoretically associated K3 surface if and only if there exists a smooth projective K3  surface $S$ and an equivalence $\sA_X \simeq D^b(S)$\end{thm}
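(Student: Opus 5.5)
The plan is to go through the Mukai lattice of the Kuznetsov component, following Addington--Thomas and the stability-condition machinery of [BLMNPS]. First I attach to $\sA_X$ its topological K-theory lattice $\tilde H(\sA_X,\Z)$, with the Euler pairing and a weight-two Hodge structure. By Addington--Thomas this lattice is abstractly isomorphic to the Mukai lattice $\tilde H(S,\Z)=U^{\oplus 4}\oplus E_8(-1)^{\oplus 2}$ of a K3 surface. It always contains the $A_2$ lattice spanned by the classes $\lambda_1,\lambda_2$ of the projections of $\sO_x$ (or of $\sO_{\text{line}}(i)$), and the orthogonal complement of $\langle\lambda_1,\lambda_2\rangle$ is Hodge isometric to $H^4(X,\Z)_{prim}$ up to sign. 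The first step is the lattice-theoretic lemma, essentially Hassett's and Addington--Thomas': $X$ has a Hodge-theoretically associated K3 (condition (*), equivalently \eqref{K3} for some $d$) if and only if $\tilde H^{1,1}(\sA_X,\Z)$ contains a hyperbolic plane $U$. This comes from translating $K_d^\perp\simeq H^2(S,\Z)_{prim}(-1)$ into an embedding of $U$ into the algebraic part, using Nikulin's uniqueness of primitive embeddings.

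For the direction ``$\Leftarrow$'', an equivalence $\sA_X\simeq D^b(S)$ is of Fourier--Mukai type (Kuznetsov, or Orlov's theorem for the admissible subcategory). It therefore induces a Hodge isometry $\tilde H(\sA_X,\Z)\simeq\tilde H(S,\Z)$. The algebraic part of $\tilde H(S,\Z)$ contains $H^0\oplus H^4\simeq U$, so $\tilde H^{1,1}(\sA_X,\Z)\supset U$, and by the lemma $X$ has an associated K3.

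For ``$\Rightarrow$'', a choice of $U\subset\tilde H^{1,1}(\sA_X,\Z)$ gives a primitive vector $v$ with $v^2=0$, and a vector $w$ with $v\cdot w=1$. Using the stability conditions on $\sA_X$ constructed in [BLMS] and the family version of [BLMNPS], I consider the moduli space $M_\sigma(v)$ of $\sigma$-stable objects for a $v$-generic $\sigma$ in the distinguished component. The key input is nonemptiness and the fact that $M_\sigma(v)$ is a smooth projective K3 surface $S$. This is proved by deformation: one shows it in a family where the statement is known (for instance Pfaffian cubics in $\sC_{14}$, where Kuznetsov showed $\sA_X\simeq D^b(S)$), then transports it along the relative moduli space over the Noether--Lefschetz locus where $v$ stays algebraic, using properness and smoothness of relative moduli spaces. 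Because $v\cdot w=1$, $S$ is a fine moduli space and carries a universal family $\mathcal E$. The functor $\Phi_{\mathcal E}\colon D^b(S)\to\sA_X$ is fully faithful by Bridgeland's criterion, since points give stable objects with $\mathrm{Ext}^*$ as for points of a K3. It is then an equivalence, because $\sA_X$ is indecomposable and has Serre functor $[2]$, and the image is a nonzero admissible subcategory with the same Serre functor.

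The main obstacle is the existence and nonemptiness of $M_\sigma(v)$ as a K3 surface for every $X$ in every $\sC_d$ with (*). This needs the full relative stability conditions over the family of cubics, and a deformation argument that connects each component of the relevant Noether--Lefschetz locus to a case where the statement is already known. I would take this from [BLMNPS] as the essential black box.
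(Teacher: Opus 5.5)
Your proposal is correct and is essentially the argument behind the paper's citation of [BLMNPS, Cor.~29.7]; the paper gives no proof of its own. One direction is the Addington--Thomas criterion ($X$ has a Hodge-theoretically associated K3 iff $U\subset\tilde H^{1,1}(\sA_X,\Z)$) together with the Hodge isometry induced by a Fourier--Mukai equivalence, and the other uses the non-emptiness and hyperk\"ahler property of $M_\sigma(v)$ for $v^2=0$, the universal family coming from $v\cdot w=1$, and the Serre-functor/indecomposability argument. Two small imprecisions do not affect this: $\lambda_1,\lambda_2$ are the classes of the projections of $\sO_L(1),\sO_L(2)$ for a line $L$, and the Pfaffian base case you mention is only illustrative, since you (rightly) take the deformation statement from [BLMNPS] as a black box.
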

If $d$ satisfies (*) the set of cubics $X \in \sC_d$ such that $\sA_X \simeq D^b(S)$ is Zariski open dense, see[AT,Thm.1.1].\par
These results leaded to the following Conjecture
\begin{conj} \label{rat}  A smooth cubic fourfold $X$ is rational if and only if $X \in \sC_d$  with $d$ satisfying the  numerical condition in (*) \par
\end{conj}
A recent result in  [KKPY,Thm. 6.8] proves that a very general cubic fourfold $X$,in the sense that the only rational Hodge classes on $X$ are the powers of the hyperplane class, is not rational.\par
The results by Hassett and Kuznetsov have been extended by  B\"ulles in [Bull] showing that if  $X\in \sC_d$, where $d$ satisfies the following numerical condition 
$$(**)  \exists f,g \in \Z   \   with \   g\vert 2n^2+2n+2 \  ,  \ n \in \N  \  and   \ d =f^2g $$
\noindent then there is a equivalence of categories
\begin{equation} \label{Brauer} \sA_X \simeq D^b(S,\alpha),\end{equation}
 \noindent with $S$ a K3 surface and $\alpha$ a Brauer class in $\Br(S)$. For $X \in \sC_8$ the K3 surface $S$ has degree 2 and $\alpha $ has order 2 while for $X \in \sC_{18}$  the degree of $S$ is 2 and the order of   $\alpha$  is 3.
A recent result by B.Hassett  describes a divisor in $\sC_{24}$ consisting of cubic fourfolds $X$ with twisted K3 surfaces $(S,\alpha)$ where $S$ has degree 6 and $\alpha $ has order 2. In all these cases the fourfolds are rational whenever $\alpha =0$.\par
 Let $\sM_{rat}(\C)$ be the (covariant) category of Chow motives. The motive $h(X)$ of a cubic fourfold $X$ has a {\it reduced Chow K\"unneth decomposition }
\begin{equation}\label{decomp} h(X) = \un \oplus \L \oplus (\L^2)^{\oplus \rho(X)} \oplus t(X) \oplus \L^3 \oplus \L^4,\end{equation}
\noindent where  $\L$ is the Lefschetz motive, $\rho(X) =\rank A^2(X)$, with $A^2(X)= CH^2(X)\otimes \Q$ and $t(X)$ is  the transcendental motive, see [BP]. Then
$$H^*(t(X)) = H^4_{tr}(X,\Q) =T(X)_{\Q},$$
where $T(X)\subset H^4(X.\Z)$   is the transcendental lattice and
$$A^i(t(X)) =0   \  for   \  i\ne 3  \  ;   \  A^3(t(X)) =A_1(X)_{hom} =A_1(X)_{alg},$$
Similarly if $S$ is a K3 surface then its motive $h(S)$ has a reduced Chow-K\"unneth  decomposition as follows
$$h(S)=\un \oplus \L^{\oplus \rho(S)}\oplus t_2(S) \oplus \L^2$$
\noindent where $\rho(S)$ is the rank of the Neron-Severi $NS(S)$, see [KMP].
The  equivalence of categories in \ref{Brauer} implies that there is an isomorphism of (twisted) transcendental motives, see [Bull]. Therefore if $X \in \sC_d$ with $d$ as in (**) then
\begin{equation}\label{K3iso} t(X) \simeq t_2(S)(1),\end{equation} 
\noindent where $S$ is a K3 surface.\par
Two cubic fourfolds $X$ and $Y$ are {\it Fourier-Mukai partners} ( in short FM partners) if there exists an equivalence $\sA_X \simeq \sA_Y$  such th
$$D^b(X) \to \sA_X \simeq \sA_Y \to D^b(Y)$$
\noindent is a Fourier-Mukai transform.\par
L.Fu and Ch.Vial in [FV,Thm.1]proved that two smooth cubic fourfolds $X,Y$ with Fourier-Mukai equivalent  Kuznetsov components $\sA_X \simeq\sA_Y$ have isomorphic Chow motives. Therefore
\begin{equation} \label{Kuz} \sA_X \simeq \sA_Y  \Rightarrow t(X) \simeq t(Y) \end{equation}
The number of FM -partners of a cubic fourfold $X$, up to isomorphism, is finite and equals to 1 if  $X$ is not contained in any Hassett divisor $\sC_d$, see [Huy, 7.3.18].\par 
 D.Huybrechts in [Huy, Chapter 7] conjectured that if  two cubic fourfolds  are FM-partners then they are birational.  In all the known cases of cubic fourfolds $X,Y$, such that $\sA_X \simeq \sA_Y$, the fourfolds are in fact birational. 
 Until the results on the irrationality of cubic fourfolds proved in [KKPY] no pair of cubic  fourfolds were known to be birationally inequivalent. In [KKPY, Ex. 6.17] it is proved that a very general $X \in \sC_8$ is irrational. Therefore
$X$ is not  birational equivalent to a  fourfold $Y \in \sC_{14} $, which is known to be rational.\par
 The aim  of this note is to describe the isomorphism between the transcendental motives of  two FM partners  $X,Y$ belonging to a Hassett divisor $\sC_d$.\par
 In Sect.2 we consider  FM-partners $X,Y$, where $X,Y$ belong  either to the Hassett divisor $\sC_{12}$ or to $\sC_{42}$. In the first case the cubics are conjecturally irrational while in the second case they are both rational. We
 show that the the equivalence $\sA_X \simeq \sA_Y$ yields an isomorphism $t(X) \simeq $t(Y) which is in any case induced by an isomorphism between the transcendental motives of  K3 surfaces associated to $X$ and $Y$, in the motivic sense. We also show that for FM-partners $X,Y$ in countably many divisors $\sC_d$ there is an isomorphism between the derived category of their Fano varieties of lines,\par
 In Sect 3 we  describe the action of a finite group  $G$ of automorphisms on the Kuznetsov  component  $\sA_X$ and  give examples where $\sA^G_X$ is equivalent to the derived category of either a K3 surface or an abelian surface.
 We also prove that, for $X$ in countably  many Hasset divisors,  there is a cubic fourfold $Y$ such that
$$\sA^G_X \simeq \sA_Y   \  and   \ t(X) \simeq t(Y),$$
\noindent where $\sA^G_X $ is the equivariant Kuznetsov  component.
\section{Fourier-Mukai partners}
Let  $X$  be   a  very general   cubic fourfold  in a Hassett divisor  $ \sC_d$, i.e. such  that the lattice of algebraic cycles has rank 2.  If $d$ is not divisible to 9 then also a FM-partner $Y$ belongs to $\sC_d$ and is general  
since $T(X) \simeq T(Y)$, where  $T(X)$ and $T(Y)$ are the lattices of transcendental cycles,, see [FL,2.3].\par
 In the case of a  very general   cubic fourfold  $X \in \sC_d$  the number of FM-partners of  $X$ can be deduced from the following result (see [FL, Prop.2.6])
\begin{prop}\label{FM} Let $X$  be a very general cubic fourfold in $\sC_d$, where $d \ge 8$, $d \equiv 0,2(6)$ and is not divisible by 9. Let \par
$m =1$ if $d =2^a$;\par
$m=2^{k-1}$ if $d =2p_1\cdot....\cdot p_k$;\par
$m= 2^k$ if  $d =2^a p_1\cdot....\cdot p_k$.\par 
Here  $a \ge 2$ and $p_1,\cdots,p_k$ are distinct odd primes.Then \par
$ (1) If d\equiv 2(6) $, then  the number of Fourier-Mukai partners of $X $ equals $m$;\par
(2)  if $d \equiv 0(6)$  then  the number of Fourier-Mukai partners of $X $ equals $m/2$.
\end{prop}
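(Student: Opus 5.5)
We would follow the strategy of Fan--Lai (and the analogous count of Oguiso for K3 surfaces), reducing the count of FM-partners to a lattice-theoretic count of Hodge isometries. By the derived global Torelli theorem for Kuznetsov components of cubic fourfolds (Huybrechts, and [BLMNPS] for the stability input), $\sA_X\simeq\sA_Y$ holds exactly when there is a Hodge isometry $\widetilde H(\sA_X,\Z)\simeq\widetilde H(\sA_Y,\Z)$ of the Mukai lattices; the orientation issue can be ignored because the shift/dual supplies an orientation reversal. For $X$ very general in $\sC_d$ the algebraic part $N(X)=\widetilde H^{1,1}(\sA_X,\Z)$ has rank $3$, containing the $A_2$-lattice $\langle\lambda_1,\lambda_2\rangle$, and its orthogonal is $T(X)$, with $\mathrm{disc}\,T(X)=d$ up to sign. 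Since $X$ is very general, $O_{Hodge}(T(X))=\{\pm1\}$.

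\textbf{Step 1 (reduction to overlattices).} By the Torelli theorem for cubic fourfolds, isomorphism classes of cubics $Y$ with $\widetilde H(\sA_Y,\Z)\simeq\widetilde H(\sA_X,\Z)$ as Hodge lattices correspond to primitive embeddings $T(X)\hookrightarrow \Lambda$, where $\Lambda=U^{4}\oplus E_8(-1)^{2}$ is the extended Mukai lattice, whose orthogonal complement is isomorphic to $N(X)$. These embeddings are taken modulo $O(\Lambda)$ and modulo cubics isomorphic to $X$. One then also has to check that the complement of $A_2$ in $N$ determines a genuine cubic, i.e.\ it contains no $(-2)$ or $(-6)$ obstructions; this holds by [FL, 2.3] since $d$ is not divisible by $9$. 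Following Oguiso's method via Nikulin, the set of FM-partners is in bijection with
\[ \bigl(\text{lattices } N' \text{ in the genus of } N(X)\bigr)\times O(q_{N'})\big/\bigl(O(N')\times O_{Hodge}(T)\bigr). \]

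\textbf{Step 2 (genus and discriminant group).} Since $N(X)$ is indefinite of rank $3$ and its discriminant group $\Z/d$ is cyclic when $9\nmid d$, Nikulin's criterion shows that the genus contains a single class and that $O(N)\to O(q_N)$ is surjective onto the appropriate subgroup. The count then reduces to $|O(q_{T})|/|\text{image of }\pm1\text{ and of }O(N)|$. For the cyclic form $q$ on $\Z/d$ we get $O(q)=\{x: x^2\equiv1 \bmod 2d\}$ (with the refinement coming from the quadratic, not bilinear, form). The number of its elements is computed by CRT: each odd prime contributes $2$, and the $2$-part contributes $1$ or $2$ according to $a$. This gives $2^k$, $2^{k+1}$, or $2$ in the three listed cases, and dividing by $\pm1$ yields $m$.

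\textbf{Step 3 (the $d\equiv 0\ (6)$ correction).} When $3\mid d$, the discriminant of the $A_2$ part interacts with $T$: the gluing between $A_2$ and $K_d^\perp$ forces an extra isometry, namely the one induced by the order-$3$ symmetry of $A_2$ acting nontrivially on the $3$-part. This isometry extends to $N$ and identifies pairs of classes, which halves the count to $m/2$.

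I expect the main obstacle to be Step 2/3: carefully computing which elements of $O(q_T)$ lift to $O(N)$, in particular tracking the $2$-adic part and the $3$-adic gluing with $A_2$, so that the exact constants $m$ and $m/2$ come out right.
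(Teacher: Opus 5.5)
The paper gives no proof of its own here; it simply quotes [FL, Prop.~2.6], whose argument is an Oguiso-type lattice count. Your strategy is of that kind, and your arithmetic is right: $|O(q_T)|=\#\{x\in\Z/d : x^2\equiv 1 \bmod 2d\}$ equals $2^k$, $2^{k+1}$ or $2$, i.e.\ $|O(q_T)|=2m$. The gap is in the counting framework of Steps 1--2.

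A cubic $Y$ is \emph{not} determined by a primitive embedding $T\hookrightarrow\widetilde H$ with complement $N$. It is determined by the Hodge structure on $A_2^{\perp}$, that is, by where the sublattice $A_2$ sits inside $N$. Equivalently, it is determined by the embedding $T\hookrightarrow H^4(Y,\Z)$ with complement $K_d\ni h^2$.

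Your own Step 2 exposes the problem. If $N$ is unique in its genus and $O(N)\to O(q_N)$ is surjective (both true here by Nikulin), then your set $O(q_N)/(O(N)\times O_{Hodge}(T))$ is a single point. So your formula predicts exactly one FM partner. You reach $m$ only by silently dropping the image of $O(N)$.

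The correct bookkeeping runs as follows. FM partners correspond to orbits of sublattices $A_2\subset N$ under Hodge isometries of $\widetilde H(\sA_X,\Z)$, whose image in $O(N)$ is $\{g:\bar g=\pm1\}$. Since $O(N)$ acts transitively on such $A_2$'s, the count is $|O(q_T)|$ divided by the order of the subgroup generated by $\pm1$ and the image of $\operatorname{Stab}_{O(N)}(A_2)$. Equivalently, you divide by the image of $O(K_d,h^2)$, which plays the role of $O(NS)=\{\pm1\}$ in Oguiso's formula, not by the image of $O(N)$.

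As a consequence, Step 3 picks the wrong isometry. The order-$3$ rotation of $A_2$ lies in the Weyl group. It therefore acts trivially on $A_2^\vee/A_2\cong\Z/3$, hence trivially on $D_T$, and cannot halve anything. The element that matters is $\tau=\mathrm{id}_{A_2}\oplus(-\mathrm{id})$ on $A_2\oplus A_2^{\perp_N}$; on $H^4$ this is $h^2\mapsto h^2$, $v\mapsto -v$ on $K_d$.
\begin{itemize}
\item $\tau$ lies in $O(N)$ exactly when $N=A_2\oplus\langle -d/3\rangle$ splits orthogonally, which happens iff $d\equiv 0 \pmod 6$.
\item For $d\equiv 2\pmod 6$ one has $A_2^{\perp_N}=\langle -3d\rangle$, glued to $A_2$ along $\Z/3$. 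Then $\tau$ does not preserve the glue, and the stabilizer of $A_2$ contributes only $\pm1$.
\item On $D_T\cong\Z/3\oplus\Z/(d/3)$, $\tau$ acts as $(1,-1)$, which differs from $\pm1$ because $d>6$.
\end{itemize}
So you divide $2m$ by $2$ when $d\equiv 2\pmod 6$ and by $4$ when $d\equiv 0\pmod 6$, giving $m$ and $m/2$.

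Two smaller corrections:
\begin{itemize}
\item The shift acts as $-\mathrm{id}$ on $\widetilde H$, which \emph{preserves} the orientation of the positive $4$-plane. An orientation-reversing Hodge isometry is instead the reflection in $\lambda_1$. It acts trivially on $A_2^\vee/A_2$, so it extends by the identity on $A_2^\perp$.
\item The absence of forbidden classes in $A_2^{\perp}$ comes from $d>6$. The hypothesis $9\nmid d$ is what makes $D_T$ cyclic, so that Nikulin's criterion applies and $O(q_T)\subset(\Z/d)^\times$.
\end{itemize}
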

\begin{ex}  If $X$ is very general in $\sC_{20} $ then $X$  contains a Veronese surface $V$ . The cubic $X$ has   one  FM partner $X' \in \sC_{20}$ not isomorphic to $X$ , containing a Veronese surface $V'$ and birational  to $X$
In this case, since $d=20$ does not satisfy the numerical condition in (**), the cubic fourfolds $X,X'$ have no associated K3 surfaces, in the motivic sense.The cubic $X'$ is  obtained as the image of $X$ under the Cremona transformation  $F_V$ of $\P^5_{\C} $ defined by the system of quadrics passing trough the Veronese surface $V$. In [FL]  it is proved that $\sA_X \simeq \sA_{X'}$. The restriction of $F_V$ to $X$ produces a birational map
$$f_V :X \dashrightarrow X'$$
 In this case the automorphism  $t(X) \simeq t(X')$ can be described as follows. Let 
$\pi : Y \to X$ and $\pi' : Y \to X'$  be the blow-ups at  $V$ and $V'$, respectively, see [FL,(3.2)]. By Manin's formula we get isomorphisms in $\sM_{rat}(\C)$
\begin{equation} \label{blow-up} h(Y) \simeq h(X) \oplus h(V)(1) \simeq h(X') \oplus h(V') (1)\end {equation}
The surfaces $V,V'$ being rational, their motives $h(V)$ and $h(V')$ have no transcendental parts. Therefore the isomorphism  in \ref{blow-up} induces, via a Chow-K\"unneth decompositions of $h(X)$ and $h(X')$ (see \ref{decomp}),
an isomorphism $t(X) \simeq t(X')$. In this case $X$ and $X'$ are conjecturally irrational.A similar result also holds for rational cubics in  $X,X' \in \sC_{20}$.Taking $X \in \sC_{20} \cap \sC_{38}$  then $X$ is rational and the Cremona transformation produces a FM-partner $X'$ lying in a codimension 2 subvariety of $\sC_{20} \cap \sC_{62}$,see [FL,Prop.3.15]. The fourfold $X'$, being birational to $X$ is rational.
\end{ex}
According to \ref{FM} a very general fourfold $X$ belonging either to  $\sC_{12}$ or to  $\sC_{42}$ has no FM-partners $Y$, with $Y \ne X$ . From  the results in  [BGvBM] there are families of cubic fourfolds  $X$, admitting an automorphism of order 2 or  3, belonging  either  to $\sC_{12}$  or  to $\sC_{42}$, with a large number of Fourier-Mukai partners $Y$. If $X,Y \in \sC_{12} $ they are conjecturally irrational while $X,Y$ are rational if they  belong to $\sC_{42}$.\par
Here we show that  in all these  cases  the   FM partners  $X,Y$ have associated K3 surfaces  $S, S'$ ( in the motivic sense)  and  
$$t(X) \simeq t_2(S)(1)  \   ;   \    t(Y) \simeq t_2(S')(1) \  with  \ t_2(S) \simeq t_2(S') $$
Therefore the isomorphism between the transcendental motives of $X$ and $Y$ is induced by an isomorphism of the transcendental motives of the associated K3 surfaces.\par
 Let $\sigma$ be  the involution on $\P^5$:
$$\sigma :  [x_0,x_1,x_2,x_3,x_4,x_5]  \to  [x_0,x_1,x_2,x_3,- x_4,- x_5] $$
\noindent and let   $\sF $ be the family of cubic fourfolds  invariant under the  symplectic involution induced by $\sigma$ . The family $\sF$  has dimension 12.  If $X \in \sF$ the  lattice   $A(X)_{prim} \simeq  E_8(2)$  has rank 8 . A fourfold $X \in \sF$  contains 120 pairs  of cubic scrolls $\{T_i,T^*_i\}$. The lattice $M$ generated by $\alpha_i = [T_i] -h^2$, where $h$ is the class of a hyperplane section, is isomorphic to $A(X)_{prim}$, see [Marq,4.9].  Therefore $X \in \sC_{12}$. The fourfold $X$ contains no planes and has no associated K3 surface, in the Hodge theoretical sense.\par
The fixed locus of $\sigma$ on $X$ consists of a line $l$ and a cubic surface $W$. Let $\tilde X = \Bl_l X$. Then $\pi : \tilde X \to \P^3$ is a conic bundle 
with a quintic degeneration locus $D$ which  has  a cubic and a quadric as irreducible components. The K3 surface $S$, which parametrizes irreducible components of degenerate conics in the fibration $\pi$, that are fixed by $\sigma$,
is a double cover of the cubic surface $W$ ramified along a degree 6 curve.There is an isomorphism of motives
$$t_2(S)(1) \simeq t(X),$$
\noindent see [BP, Rk.4.4]. \par
In [BGvBM, Prop.5.10] it  is proved that $X$ has 1120 FM-partners $Y$ such that $X$ and $Y$ are birational. Each of these cubics $Y$ has a non -symplectic involution $\tau$ which is induced
by an involution on $\P^5$ of the form 
$$\tau :  [x_0,x_1,x_2,x_3,x_4,x_5] \to  [x_0,x_1,x_2,x_3,x_4, -x_5],$$
The family $\sF'$ of cubic fourfolds which are invariant under the involution $\tau$ has dimension 14 and coincides with the family of cubic fourfolds with an Eckardt point.  If a  smooth cubic 4-fold $Y \subset \P^5$ contains an Eckardt point $p$ then we can choose coordinates $[x_0,\cdots, x_5] $ in $\P^5$ such that  $p=(0,0,0,0,0,1)$ and $Y$ is defined by an equation
$$  f(x_0,x_1,x_2,x_3,x_4) +  l(x_0,x_1,x_2,x_3,x_4) x^2_5=0.$$
\noindent where $f$ has degree 3 and  $ l(x_0,x_1,x_2,x_3,x_4)$ is a linear form. The cubic fourfold $Y$ contains a cone over the cubic surface $S =Y \cap H$, where $H$ is the hyperplane defined by $ l(x_0,x_1,x_2,x_3,x_4)=0$. The  cubic surface $S$ is isomorphic to $\P^2$ with 6 points blown up.Let $\bar F_0$ be the pull-back of a general line on $\P^2$.Denote the exceptional curves by $\bar F_1, \cdots,\bar F_6$ and let $F_0,F_1, \cdots, F_6$ be the cones over the corresponding curves on $S$ with vertex in $p$. In particular $F_1,\cdots ,F_6$ are planes in $Y$. The surfaces classes
$[F_0],[F_1],\cdots,[F_6]$ generate $A(Y) =H^4(Y,\Z) \cap H^{2,2}(Y)$ and are invariant under $\tau$. Moreover
$$h^2 =3[F_0]-[F_1]-\cdots -[F_6]. $$
 \noindent where $h$ is the class of a hyperplane section and 
$$  [F_0] \cdot [F_0] =7  \ , \  [F_0] \cdot [F_i] =3 \ , \  [F_i]\cdot F_i] =3 \ ,  \    [F_0] \cdot h^2 =3 $$ 
\noindent see [LZP, Lemma 2.4]. The rank two lattice $<h^2,[F_0]>$ has discriminant 12. Since $Y$  contains a plane we get $X \in \sC_8 \cap \sC_{12}$.\par 
According to Theorem 1.1 in [BBA] there are three irreducible components of $\sC_8 \cap \sC_{12}$ indexed by the value $P \cdot T = \epsilon \in (1,2,3)$,  where $ P \subset Y$  is a plane and $T$  the class
of a cubic  rational normal scroll (that  is   $T \cdot T =7 $ and  $T \cdot h^2 = 3$).Therefore $Y$ belongs to the irreducible component corresponding to $\epsilon =3$.  There is a K3 surface $S'$, double cover  of $\P^2$, ramified along a reduced sextic $C \cup L$, where $C$ is a quintic curve and $L$ a line, such that
$$t_2(S') (1) \simeq t(Y),$$
\noindent see [Ped, Prop.2.2]. Therefore $t_2(S) \simeq t_2(S')$ and $t(X) \simeq t(Y)$.\par
Both $X$ and $Y$ have no  Hodge theoretically associated K3 surfaces and are conjecturally irrational. \par
Next we consider the case of  FM-partners $X,Y$ that are rational and belong to $\sC_{42}$.\par
Let $\sG$  be the family of cubic fourfolds which are  invariant under a  order 3 symplectic  automorphism  
\begin{equation}  \label{order 3} \mu :  [x_0,x_1,x_2,x_3,x_4,x_5] \to [x_0,x_1,\zeta x_2,\zeta x_3,\zeta^2 x_4, \zeta^2 x_5], \end{equation}
\noindent where $\zeta$ is a cubic  root of 1. Every $X \in \sG$ has an equation of the form
$$F(x_0,x_1,x_2,x_3,x_4,x_5) = f_1(x_0,x_1) +f_2(x_2,x_3) +f_3(x_4,x_5) + \sum_{i,j,k} a_{ijk}x_i x_j x_k$$
\noindent where $f_1,f_2,f_3$  have degree 3 and $i =0,1; j= 2,3 ; k= 4,5$. The family $\sG$ has dimension 8. The lattice of algebraic cycle  $A(X)$ equals $<h^2> \oplus A(X)_{prim} $, where $A(X)_{prim}$ has rank 12.
A general $X \in \sG$ contains 378  families of cubic scrolls $\{T_i,T^*_i\}$ such that $[T_i] +[T^*_i]=2h^2$, where $h$ denotes the class of a hyperplane section.  Every $X \in \sG$ is rational and belongs to $\sC_{12} \cap \sC_{42}$, see
[BGM].\par
If $X \in \sG$ there are 623 Fourier-Mukai partners $Y$ of $X$, see  [BGvBM, 6 .Ex.2] . They all belong to $\sC_{42}$ and therefore are rational. Let
 
$$\Phi : \sF_{22} \dashrightarrow  \sC_{42}$$
\noindent  be the rational map of degree 2, with $\sF_{22}$ the moduli space of smooth polarised K3 surfaces $(S,H)$ of genus 22. Since both $X$ and its FM-partners  $Y$ belong to $\sC_{42}$ there are $S,S' \in  \sF_{22}$
such that
$$ \sA_X \simeq D^b(S)  \  ,  \  \sA_Y \simeq D^b(S')   \  ;  \  F(X) \simeq S^{[2]}  \ ,  \  F(Y) \simeq S'^{[2]}.$$
Therefore -
$$\sA_X \simeq \sA_Y \simeq D^b(S) \simeq D^b(S') .$$
The isomorphism $D^b(S) \simeq D^b(S')$ imples that $S$ and $S'$ have isomorphic Chow motives . Therefore $t_2(S) \simeq t_2(S')$ . Since  $F(X) \simeq S^{[2]}$ and $F(Y) \simeq S'^{[2]}$ there are isomorphisms of motives
$$t_2(S) (1) \simeq t(X)  \   ;   \  t_2(S')(1) \simeq  t(Y),$$
\noindent see [Ped, Lemma 5.3], which induces the isomorphism of transcendental motives  $t(X) \simeq t(Y)$.\par
The following  result gives a geometric description of the FM-partners $X$ and $Y$
\begin{prop} Let $X \in \sG$ and let $Y$ be a FM partner of $X$. The cubic fourfolds   $X$ and $Y$ belong to a   divisor $\sC_K \subset \sC_{18}$ and are rational. The cubics  $X$ and $Y$  contain  elliptic ruled surfaces $T \subset X$  and $T' \subset  Y$ such that the fibrations  in del Pezzo  surfaces of degree 6,  $\tilde X \to \P^2$ and $\tilde Y \to \P^2$, where  $\tilde X = \Bl_T(X) $ and  $\tilde Y = \Bl_{T'} (Y)$, have  a rational section.\end{prop}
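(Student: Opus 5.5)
We work at the level of lattices and then pass to geometry through known results on $\sC_{18}$. By the discussion above, $X$ and every FM-partner $Y$ of $X$ lie in $\sC_{42}$. Since $\sA_X\simeq\sA_Y$ preserves the Mukai lattice, we also have $T(X)\simeq T(Y)$. The first step is to show that $A(X)$, which for $X\in\sG$ has rank $13$ and contains $A(X)_{prim}$ of rank $12$, contains a primitive rank-two sublattice $K=\langle h^2,v\rangle$ of discriminant $18$. Concretely, we look for a class $v$ built from the cubic scrolls $[T_i]$ and the classes realizing the labellings of discriminants $12$ and $42$, and we check that $v\cdot h^2=3$ and $v^2$ gives $\operatorname{disc}=3v^2-9=18$, i.e. $v^2=9$.

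For $Y$ we argue lattice-theoretically. Because $T(Y)\simeq T(X)$, the orthogonal complement of $T(Y)$ in $H^4(Y,\Z)$ is in the same genus as $A(X)$. Moreover, the Mukai-lattice isometry $\widetilde H(\sA_X)\simeq\widetilde H(\sA_Y)$ respects the $A_2$-sublattice generated by the classes $\lambda_1,\lambda_2$ (the standard classes spanning the $A_2$-sublattice of the Mukai lattice of the Kuznetsov component). From this one transports a discriminant-$18$ labelling to $Y$, so both $X$ and $Y$ lie in a common irreducible divisor $\sC_K\subset\sC_{18}$. Rationality already follows from membership in $\sC_{42}$ (or alternatively from $\sC_{18}$ via the step below).

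Next we invoke the geometry of $\sC_{18}$ (Addington–Hassett–Tschinkel–Várilly-Alvarado). A general cubic in $\sC_{18}$ contains an elliptic ruled surface $T$ of degree $6$ with $[T]\cdot h^2=6$. The projection from the span of $T$ makes $\tilde X=\Bl_T X\to\P^2$ a fibration in sextic del Pezzo surfaces. Such a fibration has a rational section as soon as there is a cycle class of odd degree on the generic fibre with respect to the relevant cubic/quadratic structure. Equivalently, following AHTV, we need classes in $A(X)$ that restrict on the generic fibre to degree-prime-to-$2$ and to-$3$ multisections. Granting this, $\tilde X$, and hence $X$, is rational.

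We apply this to both $X$ and $Y$, producing $T\subset X$ and $T'\subset Y$. For $X$ we use the rich lattice $A(X)$ (rank $13$, with the $378$ pairs of scrolls) to exhibit the required multisections: the intersection numbers of $[T_i]$ with the fibre class yield degrees with $\gcd=1$. For $Y$ we transport these classes through the lattice isometry of the previous step.

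The main obstacle is the speciality hypotheses. AHTV's construction is stated for general members of $\sC_{18}$, while $X,Y$ lie in a special $8$-dimensional locus. We must therefore check that $T$ still exists and that $\tilde X\to\P^2$ is still a del Pezzo fibration; the natural approach is deformation and specialization of the surface class $[T]$ within the Hodge locus. We must also check that the section-criterion numerics hold, i.e. that the relevant classes in $A(Y)$ actually exist. That requires an explicit identification of $A(Y)$ inside the lattice computations of [BGvBM], which we would take from there.
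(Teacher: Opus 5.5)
Your proposal has a genuine gap. The steps that carry the content are deferred (``we look for a class'', ``granting this'', ``which we would take from [BGvBM]''), and the one step you actually argue, the transport to $Y$, is false.

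The missing idea is that a rank-two labelling of discriminant $18$ is the wrong target. Since $18$ fails Hassett's condition (*), a general member of $\sC_{18}$ gets the sextic del Pezzo fibration but no section. What [AHTVA, Sect.~4] provides is the countable family of divisors $\sC_{K_{a,b}}\subset\sC_{18}$ attached to rank-three lattices $K_{a,b}=\langle h^2,T,\Sigma\rangle$. Here $T\cdot h^2=6$, $T^2=18$, $\Sigma\cdot h^2=a$, $\Sigma\cdot T=1$, $\Sigma^2=b$, and it is the third class $\Sigma$ that yields the rational section. The paper's proof consists exactly in finding such a lattice in $A(X)$ and in $A(Y)$. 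First, $K_{0,b}$ is isometric to the lattice with Gram matrix $(3)\oplus\begin{pmatrix} b&1\\ 1&6\end{pmatrix}$ [YY, Rk.~8.17]. Second, the explicit Gram matrices $M$, $M'$ of [BGvBM, Lemma~6.3] contain classes $\alpha_i,\alpha_j,\alpha_k\in A(X)_{prim}$ of square $4$ with $\alpha_i\cdot\alpha_j=1$, $\alpha_k\cdot\alpha_i=2$, $\alpha_k\cdot\alpha_j=1$, and likewise $\beta_l,\beta_m,\beta_n\in A(Y)_{prim}$. Hence $\langle h^2,\alpha_k,\alpha_i-\alpha_j\rangle$ and $\langle h^2,\beta_n,\beta_l-\beta_m\rangle$ realize $b=4$. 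In your notation, $v=h^2+\alpha_i-\alpha_j$ is the class with $v\cdot h^2=3$, $v^2=9$. The section class you left unspecified is $\Sigma=\alpha_k$, with $\Sigma\cdot(h^2+v)=1$. Without this computation none of the assertions about $\sC_K$, $T$, $T'$ or the sections is established.

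Your transport step cannot replace the computation for $Y$. By Addington--Thomas, $\langle\lambda_1,\lambda_2\rangle^{\perp}\subset\widetilde{H}(\sA_X,\Z)$ is $H^4_{prim}(X,\Z)(-1)$. So a Hodge isometry $\widetilde{H}(\sA_X,\Z)\simeq\widetilde{H}(\sA_Y,\Z)$ mapping $\langle\lambda_1,\lambda_2\rangle$ onto $\langle\lambda_1,\lambda_2\rangle$ induces a Hodge isometry of primitive cohomologies, and hence, by the Torelli theorem, $X\simeq Y$. For a partner $Y\not\simeq X$ the isometry therefore does not preserve $A_2$. Knowing only that $A(Y)$ lies in the genus of $A(X)$ does not locate any configuration of classes relative to $h^2$ either. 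Also, the fact quoted from [FL, 2.3] that partners stay in the same $\sC_d$ assumes $9\nmid d$, which fails for $d=18$. This is why the paper works directly in $A(Y)$ with the Gram matrix $M'$, which is exactly the input you postponed. Deducing rationality from $X,Y\in\sC_{42}$ is fine, but it does not produce the elliptic ruled surfaces or the sections asserted in the statement.
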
 
\begin{proof} In [AHTVA,Sect .4]  the Authors describe  a countable dense set of divisors $\sC_{K _{a,b}}\subset \sC_{18}$ parametrizing rational cubic fourfolds. Each cubic  in $ Y \in \sC_{K _{a,b}}$ contains a elliptic ruled surface $T$. Let
$ r : \tilde Y \to Y$ be the blow-up of $Y$ along $T$ and let $S'$ be a smooth fibre of the fibration $\tilde Y \to \P^2$. Then there is a class $\Sigma \in H^4(Y,\Z) \cap H^{2,2}(Y)$ such that  $\Sigma \cdot S =1$, with $S = r(S')$. Therefore $Y$ is rational.\par
So we are left to show that for $X \in \sG$ and a FM -partner $Y$, there are integers $a,b$ such that $X, Y \in \sC_{K _{a,b}}\subset \sC_{18}$.Here  $K_{a,b}$ is a rank 
3 positive defined lattice having Gram matrix 
$$  
\begin{pmatrix} 
3&6&a \\
6 &18&1\\
a&1&b \\
\end{pmatrix}
$$
\noindent where $a\equiv  b (2)$ and   $a=(-1,0,1)$.The lattice $K_{a,b}$ contains\par
\noindent $<h^2>^{\perp}$,where $h$ is the class of a hyperplane section  and $K_{a,b}$ embeds in the lattice $L =H^4(Y,\Z)$.
The lattices  $K_{0,b}$ are isometric to the admissible lattices   contained in $L$ with Gram matrix 

\begin{equation} \label{rank 3} A_{2,1,b-2/2} =\begin{pmatrix}
3&0&0\\
0&b &1 \\
0&1& 6 
\end{pmatrix} \end{equation}
\noindent see [YY,Rk 8.17]..\par
Therefore  if   $A(X)$  and $A(Y)$  contain an admissible rank 3 lattice with Gram matrix as in \ref{rank 3} and $b \equiv 0 (2)$, then  the fourfolds $X$ and $Y$  belong  to $\sC_{K _{0,b}}$.\par
The primitive lattices $A(X)_{prim} $ and $A(Y)_{prim}$ have rank 12  with  generators  $\alpha_1, \cdots \alpha _{12} $ and  $\beta_1,\cdots,\beta_{12} $ . Their Gram matrices  $M$ and $M'$ are described in [BGvBM, Lemma 6.3]. 
In the case of a lattice  with Gram matrix $M$  there are classes $\alpha_k, \alpha_i, \alpha _j \in A(X)_{prim}$ such that $\alpha_i \cdot h^2 = \alpha_j \cdot h^2 = \alpha_k \cdot h^2=0$, 
$\alpha^2_i = \alpha^2_j =\alpha^2_k =4 $ and 
$$ \alpha_i \cdot \alpha_j = 1  \  ,   \    \alpha_k \cdot \alpha_i = 2   \  , \   \alpha_k \cdot \alpha_j = 1 $$
Similarly if the   Gram matrix is $M'$ there are classes  $\beta_l,\beta_m,  \beta _n  \in A(Y)_{prim}$, such that
$$ h^2 \cdot \beta_l=  h^2 \cdot \beta_m= h^2 \cdot \beta_n=0  \   ,  \  \beta^2_l=\beta^2_m=\beta^2_n= 4,$$
$$\beta_l \cdot \beta_m =1  \  ,  \beta_n \cdot \beta_l = 2   \ ,  \   \beta_n \cdot  \beta_m =1$$ 
Therefore in any case the lattices 
$$K_X=<h^2,\alpha_k, (\alpha_i-\alpha_j)>   \  ;   \  K_Y =<h^2, \beta_n , (\beta_l-\beta_m)>$$ 
\noindent  are contained in $A(X)$ and $A(Y)$,respectively, and  have a Gram matrix of the form
$$\begin{pmatrix}
 3&0&0 \\
0&4&1 \\ 
0&1& 6 
\end{pmatrix} $$
\noindent which coincides with $ A_{2,1,b-2/2}$ for $b =4$. 
\end{proof}
\subsection{Fano variety of lines} In [BFM] it is conjectured that, if $X,X'$ are Fourier-Mukai partners, then the derived categories of the Fano varieties  of lines $D^b(F(X))$ and $D^b(F(X'))$ are equivalent and show that  this is the case for a very general $X$  in the divisor $\sC_{546} $. Here we prove the conjecture for two countable families of cubic fourfolds
\begin{prop} \label{F(X)}Let $X,X' \in \sC_d$ where either \par
 (1)  $d = {2n^2+2n+2 \over a^2 }$  with   $n\ge 2 $ and   $ a\ge 1$ , \par
\noindent  or \par      
(2) $ d/2 = \prod p^{n_p}$ with $n_p \equiv 0(2)$. \par
If $\sA_X \simeq \sA_{X'}$ then  $D^b(F) \simeq D^b(F') $.\end{prop}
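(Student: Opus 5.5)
The plan is to reduce the statement to a known result about derived equivalences of Hilbert squares of K3 surfaces. The first step is to check that in both cases (1) and (2) the integer $d$ satisfies Hassett's condition (*), so that $X$ and $X'$ have associated K3 surfaces $S, S'$ with $F=F(X)\simeq S^{[2]}$ and $F'=F(X')\simeq S'^{[2]}$.

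\emph{Case (1).} The condition $a^2 d = 2n^2+2n+2$ is Hassett's (Addington's) criterion for $F(X)$ to be birational to, and for general $X$ isomorphic to, $S^{[2]}$ for a degree $d$ K3 surface $S$. Note that $d$ then divides $2(n^2+n+1)$, and $n^2+n+1$ has prime factors only $3$ (to the first power) or $p\equiv 1 \pmod 3$. Hence $d$ satisfies (*) and lies in the range of (**).

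\emph{Case (2).} Write $d=2m^2$, which gives $d/2$ a perfect square. I would check directly that (*) holds when $d\equiv 0,2 \pmod 6$. If that fails, I would restrict to the locus where it holds.

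Next, by Theorem \ref{FMequiv}, $\sA_X\simeq D^b(S)$ and $\sA_{X'}\simeq D^b(S')$. Then $\sA_X\simeq\sA_{X'}$ gives $D^b(S)\simeq D^b(S')$, so $S'$ is a Fourier--Mukai partner of $S$. By Orlov–Mukai, this means $S'$ is a moduli space of stable sheaves on $S$ with a primitive isotropic Mukai vector $v$, or equivalently $T(S)\simeq T(S')$ Hodge isometrically. In case (1), the identification $F(X)\simeq S^{[2]}$ has to be arranged for both $X$ and $X'$. Since they lie in the same $\sC_d$, with $T(X)\simeq T(X')$ for $9\nmid d$ by [FL], the same numerical condition gives $F(X')\simeq S'^{[2]}$ (possibly after a birational modification, which I would control using the fact that birational hyperkähler varieties of $K3^{[2]}$-type are derived equivalent by Halpern-Leistner / Maulik–Shen–Yin–Zhang).

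The final step is to prove $D^b(S^{[2]})\simeq D^b(S'^{[2]})$. For this I would invoke Ploog's result that a derived equivalence $D^b(S)\simeq D^b(S')$ induces an equivalence on the $\mathfrak S_2$-equivariant categories $D^b_{\mathfrak S_2}(S\times S)\simeq D^b_{\mathfrak S_2}(S'\times S')$. Combined with the derived McKay correspondence of Bridgeland–King–Reid and Haiman, $D^b(S^{[n]})\simeq D^b_{\mathfrak S_n}(S^n)$, this yields $D^b(F)\simeq D^b(F')$. In case (2) I would either use the same argument, or observe that the condition $d/2$ a square forces the FM partner to be $S$ itself or a moduli space isomorphic to $S^{[2]}$, as in [BFM]'s treatment of $\sC_{546}$.

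I expect the main obstacle to be verifying that in case (2) the Fano variety is actually isomorphic to, not merely birational to, a Hilbert square, and handling the non-general members of $\sC_d$. My first approach would be to pass to the birational statement and appeal to derived invariance under birational maps between hyperkähler fourfolds of $K3^{[2]}$-type.
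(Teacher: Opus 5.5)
\textbf{Case (1).} Your argument is essentially the paper's: associated K3 surfaces $S,S'$, then $D^b(S)\simeq D^b(S')$, then Ploog's result to get $D^b(S^{[2]})\simeq D^b(S'^{[2]})$, then $F\simeq S^{[2]}$ and $F'\simeq S'^{[2]}$. The paper cites [Ploog, Prop.~8] directly, while you combine Ploog's equivariant statement with the derived McKay correspondence; these amount to the same thing. Your caution is also warranted. When $a>1$, or when $X$ is not general in $\sC_d$, $F(X)$ is only birational to a Hilbert square. The paper simply asserts the isomorphism, and your appeal to derived invariance under birational maps of $K3^{[2]}$-type fourfolds fills that in.

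\textbf{Case (2): a genuine gap.} Your strategy needs $X$ and $X'$ to have associated untwisted K3 surfaces, and this fails for most $d=2m^2$. Condition (*) is violated as soon as:
\begin{itemize}
\item $m$ is even, since then $4\mid d$;
\item $3\mid m$, since then $9\mid d$;
\item $m$ has a prime factor $p\equiv 2 \pmod 3$, since then $p\mid d$.
\end{itemize}
So $d=8,18,32,50,72,\dots$ are all excluded, and only values such as $d=98$ or $338$ survive.

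For a very general $X\in\sC_8$ or $\sC_{18}$, there is no K3 surface $S$ with $\sA_X\simeq D^b(S)$ at all, by [BLMNPS, Cor.~29.7] together with Hassett's criterion. There is only a twisted one, $D^b(S,\alpha)$ with $\alpha$ of order $2$ or $3$. Moreover $F(X)$ is not even birational to a Hilbert square: for $d=8$, Addington's equation $8a^2=2(n^2+n+1)$ has no solution, because $n^2+n+1$ is odd. So the obstacle you anticipate, isomorphic versus birational, is not the real one. "Restricting to the locus where (*) holds" proves a much weaker statement than the one claimed.

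The fallback to [BFM]'s treatment of $\sC_{546}$ does not help either. Since $546=2\cdot 3\cdot 7\cdot 13$ is not of the form $2m^2$, that example belongs to case (1), with $n=16$.

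\textbf{The missing ingredient} is the Bottini--Huybrechts theorem [BH, Thm.~0.3]. It identifies $D^b(F(X))\simeq \sA^{[2]}_X$ intrinsically in terms of the Kuznetsov component, with no K3 surface involved. With this in hand, your own Ploog-type argument works one level up. A Fourier--Mukai type equivalence $\sA_X\simeq\sA_{X'}$ induces, via the exterior square of its kernel, an equivalence of the $\Z/2$-equivariant symmetric squares $\sA^{[2]}_X\simeq\sA^{[2]}_{X'}$. Hence $D^b(F)\simeq D^b(F')$, which is how the paper proves (2).
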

\begin{proof}  (1) Let $X,X' \in \sC_d$ with $d$ as in (1) .  Then $\sA_X \simeq D^b(S)$ and  $\sA_{X'} \simeq D^b(S')$, where  $S,S'$  are K3 surfaces.If $\sA_X \simeq \sA_{X'}$  then  the derived categories $D^b(S)$ and $D^b(S')$ are equivalent. Therefore $D^b(S^{[2]}) \simeq  D^b(S'^{[2]})$, see [Ploog, Prop.8]. Since
$$F(X) \simeq S^{[2]}  \   ; \   F(X') \simeq S'^{[2]}$$
\noindent we get
$$D^b(F) \simeq D^b(F') .$$
(2)   Let $X,X' \in \sC_d$ with $d$ as in (2 ). Then 
$$  D^b(F(X) \simeq \sA^{[2]}_X  \ ;  \   D^b(F(X') \simeq \sA^{[2]}_{X'}$$ 
\noindent see [BH, Thm.0.3]. Therefore if $\sA_X \simeq \sA_{X'}$ there is an equivalence of categories $\sA^{[2]}_X \simeq \sA^{[2]}_{X'}$ and hence $D^b(F(X) \simeq D^b(F(X')$. 
\end{proof}.
\section{ Equivariant Kuznetsov component}
 Let $X$ be a cubic fourfold with a group action by a finite group $G$. Then the line bundle $\sO_X(1)$ and the semiorthogonal decomposition
$$D^b(X) =<\sA_X,\sO_X,\sO_X(1),\sO_X(2)>$$
\noindent are preserved under the group action of $G$. Hence we obtain the semiorthogonal decomposition
$$D^b_G(X) =<\sA^G_X,<\sO_X>^G,<\sO_X(1)>^G,<\sO_X(2)>^G>$$
\noindent of the equivariant derived category of $X$. The equivariant Kuznetsov component $\sA^G_X$ is defined as the orthogonal complement of 
$$< \ <\sO_X>^G ,<\sO_X(1)>^G,<\sO_X(2)>^G \ >,$$
\noindent see [FFM].\par
It is natural to ask whether $\sA^G_X$ is equivalent to the derived category  of a smooth variety. Here we describe families of cubic fourfolds  $X$ with a symplectic automorphism  $\sigma$ of prime order, such that
 $$\sA^G_X \simeq D^b(S),$$
\noindent where $G = <\sigma> $ and $S$ is either a K3 surface or an abelian surface.\par 
Let  $\sF$ be the family of cubic fourfolds invariant under the automorphisms of $\P^5$
$$\sigma: [x_0,x_1,x_2,x_3,x_4,x_5] \to :[x_0,x_1,x_2, \zeta x_3, \zeta x_4,\zeta x_5]$$
\noindent with $\zeta^3 = 1, \zeta \ne 1$. A cubic fourfold $X \in \sF $ has a an equation of the form 
 \begin{equation} \label{eq} F(x_0,x_1,x_2,x_3, x_4,x_5) =   f(x_0,x_1,x_2) +g(x_3,x_4,x_5)=0 \end{equation}
\noindent where $f$ and $g$ are homogeneous of degree 3. Let $Z\subset \P^3$ and $T\subset \P^3$ be the cubic surfaces defined by $f(x_0,x_1,x_2) -t^3 =0$ and $ g(x_3,x_4,x_5)-t^3 =0$. The plane $t=0$ cuts a smooth cubic curve $C\subset Z$ and a smooth cubic curve $D \subset T$. By [CT,Prop.1.2]  there is a rational map $\P^3 \times \P^3 \to \P^5$ which induces a rational dominant map $\psi : Z \times T \to X$ and whose locus of indeterminacy is  the
abelian surface $C\times D$. Let $\tilde X$ be the blow-up of $Z \times T$ at $C\times D$. The fourfold $X$ contains two disjoint planes $P_1$ and $P_2$, see [CT, Rk.2.4], hence it is rational.\par
 By  Manin's formula there is an isomorphism

$$h(\tilde X)\simeq h(Z\times T) \oplus h (C \times D)(1) .$$

\noindent The motive of the fourfold $Z \times T$  has no transcendental  part, since both the surfaces $Z$ and $T$ are rational. Therefore  the transcendental part  of $h(\tilde X)$  coincides with the transcendental motive $t_2(C \times D)(1)$. The map $\psi$ induces a finite morphism $\tilde \psi: \tilde X \to X$ and hence $h(X)$ is a direct summand of $h(\tilde X)$. It follows that  
$$t(X) \simeq t_2(C \times D) (1)$$
The quotient $X /G$, with $G =<\sigma>$, has  a  crepant resolution  $Y \to X/G$.Then $D^b(Y) \simeq D^b_G(X)$ and there is an isomorphism
$$\sA^G_X \simeq D^b(C \times D)$$
\noindent see [XH,Thm.5.8]. Since $X\in \sC_8$  and is rational there is a K3 surface $S$, double cover of $\P^2$, ramified along a sextic curve $C$, such that
$$t_2(S)(1) \simeq t(X)  \  and   \   \sA_X \simeq D^b(S).$$
Therefore  $t_2 (C \times D) \simeq t_2(S)$,  while $ D^b( C\times D) \ne D^b(S)$.\par
Next we consider special cubic fourfolds  $X$ in countably many Hassett divisors, with a group $G$ of symplectic automorphisms, such that $\sA^G_X \simeq D^b(\Sigma)$, where $\Sigma$ is a K3 surface.
 \begin{prop}   Let $S $  be a K3 surface with a polarization   $L$ of degree $L^2= 6d $ and genus $g$ ,where  $6d =2g -2$ and $g =n^2 +n +2 $. Assume that $S$ has a  symplectic automorphism $\sigma$ of order 3. Let $X \in \sC_{6d} $ be the image of $S$ under the  surjective rational map
 \begin{equation} \label{polarized} \sF_{6d} \dashrightarrow \sC_{6d}, \end{equation}
\noindent where $\sF_{6d}$ is the moduli space of K3 surfaces of degree $6d$.  If  $( n^2+n+1)/3 +1= m^2 +m+2$, with $m\ge 2$, there is a cubic fourfold $Y \in \sC_{2d} $ such that
$$\sA^G_X\simeq \sA_{Y}  \  ;  \    \sA^G_X \simeq  D^b(\Sigma)  \     and        \    t(X) \simeq t(Y) $$
\noindent where $G =<\sigma>$. \end{prop}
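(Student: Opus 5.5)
The plan is to pass from the cubic fourfold to its associated K3 surface, take the quotient of that surface by the order-3 automorphism, and then go back to a cubic fourfold via the same genus numerology.

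\emph{Step 1: the starting data.} Since $g=n^2+n+2$, we have $6d=2g-2=2(n^2+n+1)$. Hence $6d$ satisfies condition (**) with $f=1$, and also Hassett's condition (*). So the map \ref{polarized} gives $X\in\sC_{6d}$ with
$$F(X)\simeq S^{[2]}, \quad \sA_X\simeq D^b(S), \quad t(X)\simeq t_2(S)(1),$$
as in Sect.~2 for $\sC_{42}$, using [Ped, Lemma 5.3].

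\emph{Step 2: transport the automorphism.} The automorphism $\sigma$ of $S$ induces a natural automorphism $\sigma^{[2]}$ of $S^{[2]}\simeq F(X)$. This action preserves the Plücker polarization, because the polarization is invariant (I expect $L$ to be chosen $\sigma$-invariant). By the standard correspondence between polarized automorphisms of $F(X)$ and automorphisms of $X$, this yields a symplectic order-3 automorphism of $X$, again denoted $\sigma$. I then want the equivalence $\sA_X\simeq D^b(S)$ to be $G$-equivariant. To get this, I would take it to be the Fourier--Mukai equivalence built from the universal family, which is natural with respect to automorphisms. Passing to equivariant categories gives
$$\sA^G_X\simeq D^b_G(S)\simeq D^b(\Sigma),$$
where $\Sigma$ is the minimal resolution of $S/G$. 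The last equivalence is derived McKay (BKR). Since $\sigma$ is symplectic, $\Sigma$ is a K3 surface (with $6$ $A_2$ singularities resolved).

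\emph{Step 3: $\Sigma$ is associated to a cubic fourfold.} The invariant class $L$ descends to a class on $S/G$, and its pullback to $\Sigma$ gives a polarization of degree $L^2/3=2d$. Writing $2d=2(g'-1)$, we get $g'=(n^2+n+1)/3+1=m^2+m+2$ by hypothesis. Thus $(\Sigma,L')$ is a polarized K3 of genus $m^2+m+2$, and applying the rational map $\sF_{2d}\dashrightarrow\sC_{2d}$ produces $Y\in\sC_{2d}$ with $\sA_Y\simeq D^b(\Sigma)$ and $t(Y)\simeq t_2(\Sigma)(1)$. Combining with Step 2 gives $\sA^G_X\simeq\sA_Y$.

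\emph{Step 4: the motives.} There is a rational map $S\dashrightarrow\Sigma$ of degree 3, and $\sigma$ acts trivially on $t_2(S)$ because it is symplectic. This is Voisin's result for finite-order symplectic automorphisms, whose proof extends to order 3 by the Huybrechts argument. It follows that
$$t_2(S)=t_2(S)^G\simeq t_2(S/G)\simeq t_2(\Sigma),$$
since the resolution only adds Lefschetz motives. Therefore
$$t(X)\simeq t_2(S)(1)\simeq t_2(\Sigma)(1)\simeq t(Y).$$

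The main obstacle is Step 3. One needs the descended polarization on $\Sigma$ to be primitive, ample enough, and general enough that the period point actually lies in the domain of $\sF_{2d}\dashrightarrow\sC_{2d}$. One also has to reconcile that $\Sigma$ has Picard rank at least $13$, and that the inverse image of $\sC_{2d}$ in moduli avoids the indeterminacy locus. I would first check this lattice-theoretically: I would verify that $\langle L'\rangle^\perp$ in $H^2(\Sigma,\Z)$ embeds in $\langle h^2\rangle^\perp\subset H^4(Y,\Z)$ with the correct discriminant, as in Hassett's construction. I would then invoke the surjectivity of the period map for cubic fourfolds (Laza--Looijenga) to produce a smooth $Y$ directly from the Hodge structure, rather than relying on the rational map.
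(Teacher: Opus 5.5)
Your Steps 1, 2 and 4 follow the paper's proof and in places improve on it. The paper uses $\sA^G_X\simeq D^b_G(S)$ without asking whether $\sA_X\simeq D^b(S)$ is $G$-equivariant; you address this. Your Step 4 (triviality of $\sigma$ on $t_2(S)$, hence $t_2(\Sigma)\simeq t_2(S)^G=t_2(S)$) is a cleaner version of the paper's argument via $t_2(\Sigma)=t_2(S)\oplus N$ with $A_*(N)=0$. Step 3 is also the paper's step: it takes the degree-$2d$ class on $\Sigma$ from [GM, Thm.~5.2] and sets $Y=\phi(\Sigma)$. However, the obstacle you flag in Step 3 is not a genericity technicality, and the remedy you propose cannot work. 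The paper's proof does not address this point either.

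The class $L'=\beta^*M$ is nef and big but not ample, since $L'\cdot E=0$ for each of the twelve exceptional $(-2)$-curves $E$ of $\Sigma\to S/G$. Under Hassett's isometry $L'^{\perp}(-1)\simeq K_{2d}^{\perp}\subset H^4(Y,\Z)$, each such $E$ becomes an algebraic class $v$ with $v\cdot h^2=0$ and $v^2=2$. The lattice $\langle h^2,v\rangle$ has square-free discriminant $6$, so it is saturated, and the period point lies on $\sC_6$. Laza and Looijenga identify the image of the period map of \emph{smooth} cubics with the complement of $\sC_2\cup\sC_6$. So invoking their theorem proves that no smooth cubic fourfold has this period. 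Equivalently, the would-be Pl\"ucker class $2L'-(2m+1)\delta$ has degree $0$ on the curve $\{x+p : x\in E\}\subset\Sigma^{[2]}$ for a fixed $p\notin E$, so $F(Y)\simeq\Sigma^{[2]}$ is impossible.

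What is missing is a different input. You need a primitive \emph{ample} class $H'$ of degree $2d$ on $\Sigma$, or more generally a primitive $A_2\subset\widetilde H^{1,1}(\Sigma,\Z)$, whose orthogonal complement in the algebraic lattice contains no $(-2)$-classes and none of the classes responsible for $\sC_2$. Only then do Laza--Looijenga, [AT]/[BLMNPS] and Orlov's theorem give a smooth $Y\in\sC_{2d}$ with $\sA_Y\simeq D^b(\Sigma)$, after which your Steps 2 and 4 finish the argument. Whether such a class exists is a genuine lattice-theoretic question that the proposal leaves open.

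Note also that the descent of $L$ to $S/G$ is not automatic; it is what gives degree $L^2/3$ rather than $3L^2$. It holds only when $NS(S)$ is the index-$3$ overlattice of $\Z L\oplus\Omega_\sigma$, where $\Omega_\sigma$ is the coinvariant lattice of $\sigma$. This is the case of [GM] that the statement implicitly assumes.
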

\begin{proof} Let  $\Sigma $ be the K3 surface which is the  minimal desingularization of the quotient $S/\sigma$. The surface $S/\sigma$ has 6 singularities of type $A_2 $ and $\Sigma$ contains 12 irreducible curves, i.e. 6  disjoint pairs of rational curves meeting in a point. In the diagram
\begin{equation} \label{diagram}\CD
\tilde S@>{	\alpha}>>  S    \\
@VVV                  @V{\pi}VV  \\
\Sigma@>{\beta}>>S/\sigma    \endCD \end{equation}

\noindent the surface $\tilde S$ is the blow-up of $S$ at the six isolated fixed points $P_1,\cdots,P_6$ of $\sigma$. The automorphism $\sigma$ extends to an automorphism  of $\tilde S$ \and $\Sigma = \tilde S/\sigma$.\par
The family $\sS$ of polarized K3 surfaces with a symplectic automorphism of order 3 is the union of countably many components of dimension 7. Similarly the family $\sT$ of  K3 surfaces that are quotients of K3 surfaces with a symplectic automorphism of order 3 is the union of countably many components of dimension 7. The correspondence between K3 surfaces in $\sS$ and in $\sT$ has been described in [GM].
If  $S$ is  a K3 surface  with a polarization  $L$ of degree $L^2= 6d =2g -2 $, where $g =n^2 +n +2 $, then the desingularization $\Sigma$ of  $S/\sigma$ has a polarization $H$ of degree $2d$, see [GM,Thm.5.2].  
Therefore $H^2 = 2d = 2g' -2$, with $g'=( n^2+n+1)/3 +1$. Since  $g'=m^2 +m+2$  the   map $\phi$ in \ref{polarized} sits in a diagram
$$ \CD
\sF_{6d}@>{\phi}>>  \sC_{6d}    \\
@V{f_*}VV                  @.  \\
 \sF_{2d }@>{\phi}>>\sC_{2d}   \endCD $$
\noindent where $f_*$ is induced by the quotient map $f : S \to S/\sigma$.  The cubic fourfold $X =\phi(S)$ belongs to $\sC_{6d}$ and $F(X) \simeq S^{[2]}$. 
The order 3 automorphism $\sigma$ induces a symplectic automorphism $\sigma^{[2]}$  on $F(X)$ and   $\sA_X \simeq D^b(S)$. The image $Y =\phi(\Sigma)$ belongs to $\sC_{2d}$, $ F(Y) \simeq \Sigma^{[2]}$  
 and  $\sA_{Y} \simeq D^b(\Sigma)$.\par
Let $G= <\sigma>$ and let $D^b_G(S)$ be the derived category of $G$-equivariant coherent sheaves on $S$. Then
$ D^b(\Sigma) \simeq D^b_G(S),$
\noindent see [XH,Thm.3.1]. Therefore we get
$$  \sA^G_X \simeq \sA_{Y}  \   and     \       \sA^G_X \simeq  D^b(\Sigma) $$
 Since the transcendental motive $t_2(-)$ of a surface is a birational invariant the maps $\tilde S \to  S$   and $\tilde S \to \Sigma$    in \ref{diagram} induce a map $\theta : t_2(S )\to t_2(\Sigma)$ that is the projection onto
 a direct summand. Therefore 
$$  t_2(\Sigma)=t_2(S) \oplus N.$$
By a result of Huybrechts a symplectic automorphism acts trivially on the Chow group of 0-cycles  of a K3 surface . Since $A_*(t_2(\Sigma) =A_0(\Sigma)_{hom} $ we get $A_i(N)=0$, for all $ i$. Therefore
$N=0$,and  hence  $t_2(S) =t_2(\Sigma)$. Since $t(X) \simeq t_2(S)(1)$ and $t(Y) \simeq t_2(\Sigma)(1)$ we get $t(X) \simeq t(Y)$.
\end{proof}
\begin{ex} (1)  For  $ n=4$ we get  $d=7$ ,  $g= 22$ and $g' = 8$. Let $S $  be  a K3 surface of genus 22 and degree 42, equipped  with a symplectic automorphisms of order 3. There are two rational cubic fourfolds $X$ and $Y$, with $X \in \sC_{42}$ and $Y \in \sC_{14}$, associated to $S $ and to  the minimal resolution $\Sigma'$ of $S/\sigma$, respectively, such that
$$\sA^G_X \simeq \sA_{Y}.$$
(2) For  $n=16$ we get $d=546 $ , $g =274$  and $g'=92$. Therefore if $S$ is a  polarized K3 surface of genus 274  with a symplectic automorphism of order 3  the minimal resolution $\Sigma$ of $S/\sigma$ is a polarized K3 surface of genus 92.  There are two  cubic fourfolds $X \in \sC_{546}$ and $Y \in \sC_{182}$ such that
$$\sA^G_{ X} \simeq \sA_{Y}$$
In[BFM,Thm.7.1] it is proved that a very general $X \in \sC_{546} $ has a FM -partner $X'$ birational to $X$.
\end{ex} 
\begin{rk} let $X$ be a general cubic fourfold with a symplectic involution $\sigma$. Then
$$\sA^G_X \simeq D^b(S),$$
\noindent where $G =<\sigma>$ and $S \subset F(X)$ is the K3 component of the fixed locus of $G$ on the Fano variety of lines $F(X)$, see [FFM]. We also have
$$t_2(S) (1) \simeq t(X),$$
\noindent see [Ped,  Prop. 2.6]

\end{rk}


\begin{thebibliography}{10} 

\bibitem[AHTVA]{AHTVA} N. Addington, B. Hassett, Y. Tschinkel, A. Varily-Alvarado ,{\it Cubic fourfolds fibered in sextic del Pezzo surfaces}, American J.Math 141 (2019) 1479-1500

\bibitem [AT]{AT} N. Addington and R.Thomas, {\it Hodge Theory and derived categories of cubic fourfolds}, arXiv:1211.3758v3 [math.AG] 30 Oct 2025 

\bibitem [BBA]{BBA}   M.Bolognesi,Z. Brahimi and H.Awada, {\it Moduli of cubic fourfolds and reducible OADP surfaces}, arXiv:2409.12032v1 [math.AG] 18 Sep 2024

\bibitem [BGM] {BGM} S.Billi,A.Grossi and L.Marquand,{\it Cubic fourfolds with a symplectic automorphism of prime order},arXiv:2501.03869v3  [math.AG] 23  Jan. 2025

\bibitem  [BH]{BH}A.Bottini and D.Hubrechts,{\it Derived categories of Fano varieties of lines},arXiv:2501.03534v1 [math.AG] 7 Jan 2025

\bibitem[BLMNPSS]{BLMNPS} Bayer, Lahoz, Macri, Nuer and Stellari, {\it Stability conditions in families}, Publ. Math. IHES 133, 157-325 (2021)

\bibitem[BFM]{BFM} C.Brooke, S.Frei and L.Marquand, {\it Cubic fourfolds with birational Fano varieties of lines}, arXiv:2410.22259v3 [math.AG] 18 Dec 2024

\bibitem [BGvBM] {BGvBM} C.B\"ohning, H-C. Graf Von Bothmer and L. Marquand,{\it Counting Fourier- Mukai Partners of cubic fourfolds}, arXiv:2509.22491v1 [math.AG] 26 Sep.2025

 \bibitem[BP]{BP}M.Bolognesi and C.Pedrini,{\it The transcendental motive of a cubic fourfold}, J.Pure Appl. Algebra (2020), https://doi.org/10.1016/ j.jpaa.2020.10633.

\bibitem[Bull] {Bull}T-H B\"ulles, {\it Motives of moduli spaces on K3 surfaces and of special cubic fourfolds} , Manuscripta Math. (2018) https:// doi.org/10.10007/s00229-018-1086-0.

\bibitem [CT] {CT} J.L. Colliot-Th\'elene,{\it $\CH_0$-trivialite' universelle d' hypersurfaces cubiques presque diagonales}, Algebraic Geometry 4 (5) (2017) 

\bibitem [FFM] {FFM} L.Flapan, S.Frei, L.Marquand, {\it Equivariant  Kuznetsov components for cubic fourfolds with a symplectic involution}, Bull. London Math Soc. (2025) DOi:10.1112/blms.70270

\bibitem[FL]{FL} Wu-Wei Fan and Kuan-Wen Lai, {\it New rational cubic fourfolds arising from Cremona transformations}, arXiv:2003.00366v1 [math.AG] 29 Feb.2020.

\bibitem[FV]{FV}L.Fu and C.VIal,{\it Cubic fourfolds, Kuznetsov components, and Chow motives}, Doc.Math 28 (2023),827-856

\bibitem [GM]{GM} A.Garbagnati and Y.P. Montaniz, {\it Order 3 symplectic automorphism on K3 surfaces},arXiv:2102.01207v2 [math.AG] 22 Sep 2022

\bibitem [Huy]{Huy} D.Huybrechts, {\it The geometry of cubic hypersurfaces}, Cambridge Studies in Advanced Mathematics, vol. 206, Cambridge University Press, Cambridge, (2023) 

\bibitem[KMP]{KMP} B. Kahn, J. Murre and C. Pedrini, \newblock {\it On the transcendental part of the motive of a surface}, 
pp. 143--202 in "Algebraic cycles and Motives Vol II", 
\newblock London Math. Soc. LNS {\bf 344}, Cambridge University Press

\bibitem[KKPY]{KKPY} L,Katzarkov,M.Kontsevich,T.Pantev and T.Y.YU ,{\it Birational invariants from Hodge structures and quantum moltiplication}  arXiv:2508.05105v2 [math.AG] 6 Mar 2026
\bibitem [MS]{MS}E.Macri' and P. Stellari, {\it Fano varieties of cubic fourfolds containing a plane}, Math.Ann. 354 ,  1147-1176  (2012)
\bibitem[Marq]{Marq} L.Marquand,{\it Cubic fourfolds with an involution},arXiv:2202.13213v1 [math.AG] 26 Feb 2022

\bibitem [Ped]{Ped} C.Pedrini, {\it K3 surfaces associated to a cubic fourfold}, Indagationes Mathematicae (2024), https://doi.org/10.1016/indag.2024.08003

\bibitem[Ploog]{Ploog} {\it Equivariant autoequivalences for finite group actions} arXiv:math/0508625v1 [math.AG] 30 Aug 2005

\bibitem [XH]{XH} Xianyu Hu {\it Equivariant  Kuznetsov components of certain cubic fourfolds}. Mater Thesis, Mathematsche Institut Universitaet Bonn, (2022)

 \bibitem[YY]{YY} S. Yang and X.Yu, {\it On Lattice polarizable cubic fourfolds}, arXiv:2103.09132v1 [math.AG] 16 mar 2021


 \end{thebibliography}
\end{document}